\documentclass[a4paper, 10pt]{amsart}
\usepackage{amsfonts,mathrsfs,amsmath,amscd,amssymb}
\usepackage[pdftex]{graphicx}
\usepackage{hyperref}
\usepackage{color}
\newtheorem{theorem}{Theorem}

\usepackage[english]{babel}
\usepackage[all,cmtip]{xy}
\usepackage{xcolor}
\newtheorem{proposition}[theorem]{\sf Proposition}

\newtheorem{definition}[theorem]{\sf Definition}

\newtheorem{remark}[theorem]{\sf Remark}

\def\C{\mathbb C}
\def \Z{\mathbb Z}
\def \N{\mathbb N}

\def \O{\mathcal O}

\def \ra{\rightarrow}

\let\olddefinition\definition
\renewcommand{\definition}{\olddefinition\normalfont}

\let\oldremark\remark
\renewcommand{\remark}{\oldremark\normalfont}

\begin{document}
\author{Tanya Kaushal Srivastava}
\title[Hilbert scheme of Supersingular Enriques]{Pathologies of Hilbert scheme of points of supersingular Enriques surface}
\date{\today}
\begin{abstract}
We show that Hilbert schemes of points on supersingular Enriques surface in characteristic 2, $Hilb^n(X)$, for $n \geq 2$ are simply connected, symplectic varieties but are not irreducible symplectic as the hodge number $h^{2,0} > 1$, even though a supersingular Enriques surface is an irreducible symplectic variety. These are the classes of varieties which appear only in characteristic 2 and they show that the hodge number formula for G\"ottsche-Soergel does not hold over characteristic 2. It also gives examples of varieties with trivial canonical class which are neither irreducible symplectic nor Calabi-Yau, thereby showing that there are strictly more classes of simply connected varieties with trivial canonical class in characteristic 2 than over $\C$ as given by Beauville-Bogolomov decomposition theorem.
\end{abstract}
\thanks{I would like to thank M. Zdanwociz for various mathematical discussions which lead to this article and Tamas Hausel for hosting me in his research group at IST Austria.  This research has received funding from the European Union's Horizon 2020 research and innovation programme under Marie Sklodowska-Curie Grant Agreement No. 754411.}
\maketitle

\smallskip
\noindent \textbf{Keywords.} Hilbert scheme of points, Enriques surface, Positive characteristic, Lifting to characteristic zero, Irreducible Symplectic varieties.\\
\noindent \textbf{2010 Mathematics Subject Classification:} 14G17, 14J28, 14J10. 

\section*{Introduction}

This article stems from the exploration of the remark of F. Charles in \cite{FC-ISV} that the naive definition of irreducible symplectic varieties in positive characteristic is not satisfactory. We give a concrete example displaying one not very desirable phenomenon of ``\textbf{not} being deformation closed" class. The class of examples we study occurs only in characteristic 2 but we expect to find similar classes of examples in higher dimensions once more of birational classification of  varieties is studied for varieties in positive characteristic (cf. Remark \ref{charneq2}). 

The naive definition of (irreducible) symplectic varieties in arbitrary characteristic is given as:
\begin{definition}\cite[Definition 3.1]{FuLi} \label{ISV}
A smooth projective variety $X$ over an algebraically closed field $k$ is said to be \textbf{symplectic} if its \'etale fundamental group is zero and it has a nowhere vanishing global two form $\sigma \in H^0(X, \Omega^2)$. A symplectic variety is said to be \textbf{irreducible symplectic} if it is a symplectic variety and the nowhere vanishing global two form $\sigma$ generates $H^0(X, \Omega^2)$, i.e., $\dim H^0(X, \Omega^2) =1 $.   
\end{definition} 
The existence of a nowhere vanishing global two form $\sigma$ implies that the canonical bundle is trivial, and in case char $k \neq 2$ that the dimension of $X$ is even. This is because in characteristic 2, odd dimensional skew symmetric 2 forms do not have to be singular.  

\textbf{Question:} Does there exist odd dimensional irreducible symplectic varieties over a field of characteristic 2? 

\begin{remark}
Over $\C$, analytification of a smooth projective variety gives us a K\"ahler manifold and then being irreducible symplectic is equivalent to being a Hyperk\"ahler manifold. Indeed, first note that \'etale fundamental group being trivial is equivalent to topological fundamental group of the analytification is zero, as   
$$
\pi_{et}(X)=\widehat{\pi_{top}(X^{an})},
$$ 
where $\hat{-}$ denotes profinite completion, see \cite[Remark I.5.1 (c)]{Milne80}. Thus, $\pi_{top}(X^{an}) = 0 \Rightarrow \pi_{et}(X) =0$. For the other side, as our varieties have $c_1(X) = 0$, the Beauville-Bogomolov theorem \cite{BB} implies that we have 
$$ 
0 \ra  \Z^k \ra \pi_{top}(X) \ra G \ra 0,
$$
where $G$ is a finite group. Since profinite completion is right exact and $\hat{G} = G$, see \cite{RZ},  we have
$$
\widehat{\Z^k} \ra \pi_{et}(X) \ra G \ra 0.
$$
Now, $\pi_{et}(X) = 0 \Rightarrow G=0$ which implies $\pi_{top}(X)= \Z^k$ but taking profinite completion implies that $k =0$. Thus $\pi_{top}(X) = 0$. Now the equivalence of the two definitions can be seen using Yau's proof of Calabi theorem and Beauville-Bogomolov decomposition, see \cite{HuyCHM} for details. 
\end{remark}

Moreover over $\C$, $c_1(X) =0$ is  a deformation invariant, as well as being simply connected and having a  non-degenerate two form. The latter two results are easy to see using Ehresmann's fibration theorem. 

The main result of this article is
\begin{proposition} [Proposition \ref{SSEsurface} , \ref{fund}, \ref{HN}]
Let $X$ be a supersingular Enriques surface over $k$, algebraically closed field of characteristic $2$, then it is an irreducible symplectic variety. Moreover, $Hilb^n(X)$ for $n \geq 2$ is a symplectic variety which is not irreducible symplectic. 
\end{proposition}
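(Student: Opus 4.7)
The plan is to address the three assertions in sequence, since each leverages the previous. For the first claim, that a supersingular Enriques surface $X$ in characteristic $2$ is irreducible symplectic, I would recall that the supersingular type is characterized by $\mathrm{Pic}^\tau_X = \alpha_2$ together with $\omega_X \cong \mathcal{O}_X$, so $h^{2,0}(X) = h^0(X,\omega_X) = 1$ and the generator is nowhere vanishing. The étale fundamental group must then be shown to be trivial: since the nontrivial torsors over $\alpha_2$ are purely inseparable, there are no connected étale covers, and a direct examination (using, e.g., the fact that any étale cover of $X$ would lift to characteristic zero as an étale cover of a classical Enriques lift, whose fundamental group is $\mathbb Z/2$) rules out any nontrivial étale cover in the supersingular case. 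This gives $\pi_1^{et}(X) = 0$, so $X$ is irreducible symplectic in the sense of Definition \ref{ISV}.

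For $\mathrm{Hilb}^n(X)$ being symplectic, I would first invoke Fogarty's theorem, which gives smoothness of $\mathrm{Hilb}^n(S)$ for any smooth surface $S$ in arbitrary characteristic. For the symplectic form, I would carry Beauville's construction to positive characteristic: the $S_n$-invariant $2$-form $\sum_i \mathrm{pr}_i^* \sigma$ on $X^n$ descends to the smooth locus of $\mathrm{Sym}^n(X)$ and extends, via the Hilbert-Chow morphism $\pi \colon \mathrm{Hilb}^n(X) \to \mathrm{Sym}^n(X)$, to a symplectic form on $\mathrm{Hilb}^n(X)$; nowhere vanishingness is a local statement on the universal family and reduces to the fact that $\sigma$ is nowhere vanishing on $X$. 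For simple connectedness I would use that $\pi_1^{et}(\mathrm{Hilb}^n(S)) = \pi_1^{et}(S)^{ab}$ (an extension of Fogarty--Iversen valid in positive characteristic, obtained via the Hilbert--Chow morphism and descent from the symmetric product), so vanishing of $\pi_1^{et}(X)$ propagates to the Hilbert scheme.

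The non-irreducibility is the key arithmetic point. I would give a lower bound for $h^0(\mathrm{Hilb}^n(X), \Omega^2)$ by pulling back along $\pi$: since $\pi$ is a crepant resolution with quotient-singularity target, reflexive $2$-forms on $\mathrm{Sym}^n(X)$ extend uniquely to $\mathrm{Hilb}^n(X)$, so
$$
h^0(\mathrm{Hilb}^n(X), \Omega^2) \;\geq\; \dim_k \bigl( H^0(X^n, \Omega^2_{X^n}) \bigr)^{S_n}.
$$
Künneth together with the $S_n$-action identifies the right-hand side with $h^{2,0}(X) + \binom{h^{1,0}(X)+1}{2}$. For the supersingular Enriques surface in characteristic $2$ one has $h^{1,0}(X) = 1$ (this is precisely the distinguishing Hodge-theoretic feature of the $\alpha_2$-type, in contrast with the classical and singular Enriques types), giving the bound $h^{2,0}(\mathrm{Hilb}^n(X)) \geq 1 + 1 = 2$, which contradicts the irreducible symplectic condition.

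The main obstacles I anticipate are (a) justifying the equality $\pi_1^{et}(\mathrm{Hilb}^n(X)) = \pi_1^{et}(X)^{ab}$ in positive characteristic, and (b) the extension statement for reflexive $2$-forms across the Hilbert-Chow morphism in characteristic $2$, where one must verify that the standard proof (valid in characteristic zero via Kempf or symmetrization arguments) carries through; here the careful point is that $\mathrm{Sym}^n(X)$ has $S_n$-quotient singularities with $|S_n|$ possibly divisible by the characteristic, so one needs the crepant-resolution argument for $\pi$ rather than a direct invariant-theoretic descent.
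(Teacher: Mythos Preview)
Your treatment of the first and third claims aligns with the paper's. For $X$ itself, both arguments reduce to $\omega_X\cong\mathcal O_X$ plus the Bombieri--Mumford computation $\pi_1^{\text{\'et}}(X)=0$ from the $\alpha_2$-torsor. For $h^{2,0}(\mathrm{Hilb}^n(X))>1$, the paper exploits the same mechanism you identify: in characteristic $2$ the sign in the $S_n$-action on $\bigoplus_{i<j}p_i^*\Omega_X\otimes p_j^*\Omega_X$ disappears, so symmetric tensors built from a nonzero $\alpha\in H^0(X,\Omega^1_X)$ survive as extra invariants. One simplification worth noting: the paper sidesteps your obstacle~(b) entirely by invoking birational invariance of $h^{q,0}$ (so $h^{2,0}(\mathrm{Hilb}^n(X))=h^{2,0}(X^{(n)})$ on the nose) rather than extending reflexive forms across a crepant resolution with $p\mid|S_n|$. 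This gives an equality, not just your lower bound, and avoids the delicate extension question altogether.

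The real divergence is in simple-connectedness of $\mathrm{Hilb}^n(X)$. You propose to port the formula $\pi_1^{\text{\'et}}(\mathrm{Hilb}^n(X))\cong\pi_1^{\text{\'et}}(X)^{\mathrm{ab}}$ to positive characteristic, which you rightly flag as nonstandard; the paper does not attempt this. Instead it argues in two steps. First, Liedtke's lifting of $X$ to an Enriques surface over a DVR yields a lift of $\mathrm{Hilb}^n(X)$, and the specialization surjection $\pi_1^{\text{\'et}}(\mathrm{Hilb}^n(X_{\bar K}))\twoheadrightarrow\pi_1^{\text{\'et}}(\mathrm{Hilb}^n(X))$ combined with the classical computation $\pi_1(\mathrm{Hilb}^n(X_{\mathbb C}))\cong\mathbb Z/2\mathbb Z$ bounds the order by $2$. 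Second, Blass's theorem that supersingular Enriques surfaces are unirational makes $\mathrm{Hilb}^n(X)$ unirational as well, and Ekedahl's result that the \'etale fundamental group of a unirational variety in characteristic $p$ has order prime to $p$ then kills the possible $2$-torsion. This lifting-plus-unirationality argument circumvents your obstacle~(a) completely, so the paper's route is on firmer ground than the direct one you sketch.
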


The article is organized as follows: In section 1, we recall basic facts abouts Enriques Surface and note that it in characteristic 2, supersingular Enriques surface are indeed irreducible symplectic varieties as defined above in Definition \ref{ISV} and in section 2, we begin by recalling the properties of Hilbert scheme of points on a surface and show that in case of supersingular Enriques surface, the Hilbert scheme has the desired properties as claimed above. 

\section{Enriques Surface}

We refer to the classification of surfaces by Bomberi-Mumford \cite{BMIII} for details.

\begin{definition}
An \textbf{Enriques surface} is a smooth and proper surface $X$ of finite type over an algebraically closed field $k$ of characteristic $p \geq 0$ such that 
$$
\omega_X \sim_{num} \O_X \ \text{and} \ b_2(X) =10,
$$
where $\sim_{num}$ denotes numerical equivalence and $b_i$ denotes the $i$-th \'etale or crystalline Betti number. 
\end{definition}

In char $k \neq 2$, these surfaces have the following invariants
\[\begin{matrix}
\omega_X \not\cong \O_X & \omega_X^2 \cong \O_X &  p_g= h^0(X, \omega_X)=0 & h^{0,1}= 0\\
\chi(\O_X) = 1 & c_2 = 12 & b_1 = 0 & b_2 = 10.
\end{matrix}\]
And the Hodge diamond is (char $\neq 2$):
\[\begin{matrix}
 & & h^{2,2} &  &\\
 & h^{1,2}&  &h^{2,1}  &\\
h^{0,2} & & h^{1,1} &  &h^{2,0} \\
 & h^{0,1}&  &h^{1,0}  & \\
 & & h^{0,0} & &
\end{matrix}
\ := \
\begin{matrix}
 & & 1 &  &\\
 & 0&  &0  &\\
0 & & 10 &  &0 \\
 & 0&  &0  & \\
 & & 1 & &
\end{matrix}\]
As $\omega_X^2 \cong \O_X$, this defines a defines an \'etale double cover $\tilde{X} \ra X$, where $\tilde{X}$ is a K3 surface. The existence of a K3 double cover implies $\pi_{et}(X) = \Z/2\Z$ as $\pi_{et}(\tilde{X})= 0$. 

In case the characteristic of the ground field is 2, we have the following invariants of Enriques surfaces:
\[\begin{matrix}
\omega_X \sim_{num} \O_X & \chi(\O_X) = 1 & c_2 = 12 & b_1 = 0 & b_2 = 10.
\end{matrix}\]
However, in char k = 2, hodge symmetry can fail, i.e., $h^{0,1} = 1$, while $h^{1,0}= 0$. Moreover, $\omega_X \not\cong \O_X$ if and only if $h^{0,1}=0$.  In case $h^{0,1}=1$, we can study the action of Frobenius on $H^1(X, \O_X)$, which is either zero or bijective, this leads to the following definition and characterization of Enriques surface in characteristic 2:
\begin{definition}
An Enriques surface $X$ is called 
\begin{enumerate}
\item \textbf{Classical} if $h^1(\O_X) = 0$, hence $\omega_X \not\cong \O_X$ and $\omega^2 \cong \O_X$.
\item \textbf{1-ordinary/Singular} if $h^1(\O_X) = 1$, hence $\omega_X \cong \O_X$ and Frobenius is bijective on $h^1(\O_X)$.
\item \textbf{Supersingular} if  $h^1(\O_X) = 1$, hence $\omega_X \cong \O_X$ and Frobenius is zero on $h^1(\O_X)$.
\end{enumerate}
\end{definition}

The following theorem gives us the fundamental group of the three kinds of Enriques surface:
\begin{theorem}[Bomberi-Mumford]
Let $X$ be an Enriques surface with char $k = 2$. Then $X$ admits a canonical covering space $\pi: \tilde{X} \ra X$ of degree $2$, whose structure group is
\begin{enumerate}
\item $\mu_2$ if $X$ is classical $\Rightarrow \pi_{et}(X) = 0$
\item $\Z/2\Z$ if $X$ is 1-ordinary/singular $\Rightarrow \pi_{et}(X) = \Z/2\Z$
\item  $\alpha_2$ if $X$ is supersingular $\Rightarrow \pi_{et}(X) = 0$.
\end{enumerate}
\end{theorem}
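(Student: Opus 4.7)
The statement splits into two parts: first, that $X$ is itself irreducible symplectic; second, that $Hilb^n(X)$ for $n \geq 2$ is symplectic but fails to be irreducible symplectic. For the first part, the classification of Enriques surfaces in characteristic $2$ supplies $\omega_X \cong \O_X$ in the supersingular case, so a trivialization $\sigma$ of $\omega_X$ is a nowhere-vanishing element of $H^0(X, \Omega^2_X) = H^0(X, \omega_X) \cong H^0(X, \O_X) = k$, which is one-dimensional. Combined with $\pi_{et}(X) = 0$ supplied by the Bombieri--Mumford theorem just stated, this verifies Definition \ref{ISV} immediately.

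For the second part, I would first invoke Fogarty's theorem to get smoothness of $Hilb^n(X)$ of dimension $2n$ in arbitrary characteristic. For simple connectedness, I would use the Hilbert--Chow morphism $\pi \colon Hilb^n(X) \ra Sym^n(X)$, a proper birational morphism from a smooth connected variety to a normal one, which induces an isomorphism of \'etale fundamental groups; combined with the identification $\pi_{et}(Sym^n X) \cong \pi_{et}(X)^{ab}$ (the standard $S_n$-quotient argument, valid in positive characteristic) and $\pi_{et}(X) = 0$ from the first part, this yields $\pi_{et}(Hilb^n X) = 0$. For the symplectic structure I would adapt Beauville's construction: the $S_n$-invariant 2-form $\sum_{i=1}^n p_i^* \sigma$ on $X^n$ descends to the locus of distinct $n$-tuples in $Sym^n(X)$, pulls back under $\pi$, and extends by Hartogs to a global 2-form on the smooth variety $Hilb^n(X)$; non-degeneracy is checked on the open dense locus of distinct tuples where it coincides with $\sigma^{\oplus n}$, and equivalently, $\omega_{Hilb^n X} \cong \O_{Hilb^n X}$ with $\sigma^{\wedge n}$ as a nowhere-vanishing trivialization.

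The crucial final step is to show $h^{2,0}(Hilb^n X) > 1$, which is the characteristic-$2$ pathology. Naively applied to the asymmetric Hodge diamond of $X$ (where $h^{1,0}(X) = 0$ while $h^{0,1}(X) = 1$), the G\"ottsche--Soergel formula predicts $h^{2,0}(Hilb^n X) = 1$, so additional global 2-forms can only come from characteristic-$2$-specific features. My plan is to produce these extra 2-forms by exploiting one of: the non-reduced Picard scheme $Pic^\tau(X) \cong \alpha_2$, acting on the universal subscheme and hence on tautological bundles over $Hilb^n(X)$; the failure of $R^i \pi_* \O_{Hilb^n X} = 0$ in characteristic $2$, which would enlarge $\pi_* \Omega^2_{Hilb^n X}$ beyond the naive $S_n$-invariants on $X^n$; or a direct cohomological computation leveraging the nonvanishing of $H^1(X, \O_X)$. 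The hard part is exactly this last step: both identifying the correct source of the extra 2-forms and rigorously checking their linear independence from the Beauville symplectic form require genuinely characteristic-$2$ input, since in characteristic $0$ (or $p \neq 2$) the G\"ottsche--Soergel prediction is sharp.
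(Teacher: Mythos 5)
Your proposal does not address the statement at hand. The theorem to be proved is the Bombieri--Mumford classification: every Enriques surface in characteristic $2$ admits a canonical degree-$2$ covering $\pi\colon \tilde X \ra X$ which is a torsor under $\mu_2$, $\Z/2\Z$, or $\alpha_2$ according as $X$ is classical, singular, or supersingular, together with the resulting computation of $\pi_{et}(X)$. What you have written is instead a proof sketch of the paper's main result (Propositions \ref{SSEsurface}, \ref{fund} and \ref{HN} on the Hilbert scheme of a supersingular Enriques surface); nowhere do you construct the canonical cover, identify its structure group, or deduce the fundamental group from it --- indeed you \emph{cite} the theorem under review as an input (``$\pi_{et}(X)=0$ supplied by the Bombieri--Mumford theorem just stated''), which makes the argument circular as a proof of that theorem. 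The paper itself quotes this result from \cite{BMIII} without proof, but a proof would have to run along entirely different lines: one identifies $\mathrm{Pic}^\tau(X)$ as a finite group scheme of order $2$ --- namely $\Z/2\Z$, $\mu_2$ or $\alpha_2$ in the three cases, which is exactly where the trichotomy and the Frobenius action on $H^1(X,\O_X)$ enter --- and takes $\tilde X$ to be the associated torsor under the Cartier dual group scheme. The assertion about $\pi_{et}(X)$ then follows because only the $\Z/2\Z$-torsor (the singular case) is \'etale, with total space a simply connected K3 surface, whereas $\mu_2$- and $\alpha_2$-torsors are purely inseparable in characteristic $2$ and are invisible to the \'etale fundamental group; one must still argue separately that $X$ admits no other nontrivial \'etale covers in the classical and supersingular cases.

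A secondary remark, in case your text was meant for the main proposition: even there, the decisive step --- exhibiting extra global $2$-forms on $Hilb^n(X)$ forcing $h^{2,0}>1$ --- is left as a list of candidate strategies rather than an argument. The paper's mechanism is more elementary than any of your three proposals: by birational invariance of $h^{2,0}$ one reduces to $\dim H^0(X^n,\Omega^2_{X^n})^{S_n}$, decomposes $\Omega^2_{X^n}$ into the summands $\bigwedge^2 p_i^*\Omega_X$ and $p_i^*\Omega_X\otimes p_j^*\Omega_X$, and observes that in characteristic $2$ the sign $(-1)$ in the transposition action equals $+1$, so sections of the cross terms of the form $p_i^*\alpha\otimes p_j^*\alpha$ become $S_n$-invariant in addition to the span of the Beauville form. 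Neither $\mathrm{Pic}^\tau(X)$ acting on tautological bundles nor higher direct images under the Hilbert--Chow morphism is needed.
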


Now we are ready to note that supersingular Enriques surfaces are irreducible symplectic varieties.

\begin{proposition} \label{SSEsurface}
Supersingular Enriques surface is an irreducible symplectic variety over an algebraically closed field of char $p =2$ and it does not lift to any irreducible symplectic variety in char 0, even more it lifts to a variety which has $\omega_X \not\cong \O_X$.   
\end{proposition}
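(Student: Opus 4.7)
The plan is to verify the three defining conditions of Definition \ref{ISV} for a supersingular Enriques surface $X$ directly, and then to combine a lifting theorem for Enriques surfaces with the characteristic zero Enriques Hodge diamond recalled in Section 1 to rule out any characteristic zero lift being (irreducible) symplectic.

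First I would check the symplectic axioms. Vanishing of $\pi_{et}(X)$ is exactly case $(3)$ of the Bomberi--Mumford theorem quoted immediately above. By definition of the supersingular type, $\omega_X \cong \O_X$; since $X$ is proper and connected, $h^0(X,\O_X) = 1$, so any nonzero section of $\omega_X$ is a trivialisation and hence a nowhere vanishing two form $\sigma \in H^0(X,\Omega^2_X) = H^0(X,\omega_X)$. The same identification gives $\dim H^0(X,\Omega^2_X) = 1$, so $X$ is irreducible symplectic in the sense of Definition \ref{ISV}.

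For the lifting claims I would invoke the theorem (due to Liedtke, following Ekedahl--Shepherd-Barron) that every Enriques surface in characteristic $2$ admits a projective lift to $\Spec W(k)$. Let $\mathcal{X} \to \Spec W(k)$ be such a lift and $X_\eta$ its generic fibre. Smooth base change preserves $b_2 = 10$, and flatness of the relative dualising sheaf together with the fact that numerical triviality is an open condition in a smooth proper family preserves the Enriques condition, so $X_\eta$ is an Enriques surface over a field of characteristic zero. The characteristic zero Hodge diamond recalled in Section 1 then gives $h^0(X_\eta,\omega_{X_\eta}) = h^{0,2}(X_\eta) = 0$, so $\omega_{X_\eta} \not\cong \O_{X_\eta}$ and $X_\eta$ carries no nonzero global two form; \emph{a fortiori} no lift of $X$ can be (irreducible) symplectic.

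The step I expect to be the main obstacle is the precise citation of the lifting statement: one needs both existence of a projective (not merely formal) lift and the fact that its generic fibre stays inside the Enriques class. Once that is granted, the jump $h^{0,2}(X) = 1 > 0 = h^{0,2}(X_\eta)$ between special and generic fibres is exactly the pathology that forces the failure of the irreducible symplectic property under lifting, consistent with the introduction's observation that, over $\C$, carrying a nowhere vanishing two form is a deformation invariant.
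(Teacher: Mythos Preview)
Your proposal is correct and follows essentially the same route as the paper: verify Definition \ref{ISV} directly from $\omega_X\cong\O_X$ and the Bombieri--Mumford computation of $\pi_{et}$, then invoke Liedtke's lifting theorem and the characteristic zero Enriques Hodge diamond to see that any lift has $h^{0,2}=0$. One small correction: Liedtke's lift is constructed over a possibly ramified extension of $W(k)$ (initially as an algebraic space, then shown to be a projective scheme), not over $W(k)$ itself; and for the claim that the generic fibre remains an Enriques surface the paper prefers invariance of Kodaira dimension, $\chi(\O)$ and $b_1$ together with surface classification, which is a bit cleaner than appealing to openness of numerical triviality of $\omega$.
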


\begin{proof}
In \cite[Theorem 4.10]{Lie15}, Liedtke has shown that every Enriques surface lifts to characteristic zero, see remark \ref{Enriqueslift} below. \\
Now, all we are have to do is note that for supersingular Enriques surface, we have $\omega_X \cong \O_X$, which is equivalent to having $H^0(X, \Omega_X^2) = k \sigma$, where $\sigma$ is nowhere degenerate 2 form (since we are in $\dim 2$), and $\pi_{et}(X)=0$, thus we are done. 
\end{proof}

\begin{remark} \label{Enriqueslift}
Note that in  \cite[Theorem 4.10]{Lie15}  Liedtke  constructs the lift of a supersingular Enriques surface only as an algebraic space over a possibly ramified extension of W(k), the Witt ring of $k$, but proves all such lifts are projective, which implies that they are indeed just schemes, it follows from  \cite[II.6.16]{KnutsonAS}. Moreover, the generic fibers of these lifts are Enriques surfaces over a field of characteristic 0, indeed, use the results of \cite[Theorem 11.4]{LiedtkeCAS} or \cite[Section 9]{K-U95} to see that the Kodaira dimension of the generic fiber is zero, Euler characteristic is 1 and first betti number is 0; then classification of surfaces as in \cite{LiedtkeCAS} allows us to conclude.
\end{remark}

Also, one can deform supersingular Enriques surface to a classical one, thus unlike in characteristic zero, the class of varieties with trivial canonical bundle is not deformation closed in characteristic $p > 0$, \cite{AZST}. 
\section{Hilbert scheme of Enriques surface}
In this section we will prove that the Hilbert scheme of points on a supersingular Enriques surface is a symplectic variety but not irreducible symplectic by showing that it is simply connected, has a non-degenerate two form but the group $H^0(\Omega^2)$ is not generated by it. We start by recalling a few basic results on Hilbert scheme of points on a surface, which immediately implies that the Hilbert scheme of points on supersingular Enriques has trivial canonical bundle and then we compute its fundamental group and $h^{2,0}$.   For basic results and proofs about Hilbert scheme of points of a scheme (in arbitrary characteristic)  see \cite[Chapter 7]{BK}.

\subsection{Hilbert schemes of points on a surface}\label{HilbS}

Let $X$ denote a nonsingular projective surface over an algebraically closed field $k$ of characteristic $p \geq 0$ and $Hilb^n(X)$ denote the \textbf{Hilbert scheme of length $n$} on $X$. It is the unique scheme representing the functor 
\begin{eqnarray*}
\textbf{Hilb}^n(X): \{Sch / k \} &\xrightarrow{\hspace*{2.5cm}} \{Sets\} \\
S &\mapsto \{ \pi: Z \ra S \ \text{flat}; \ Z \subset X_S \\ 
 &\text{length $n$ closed subscheme}\}.
\end{eqnarray*} 
Moreover, it is a smooth projective variety of dimension of dim $2n$, see  \cite[Theorem 7.2.3, 7.4.1]{BK}. Note that any point of $Hilb^n(X)$ is a closed subscheme $Z$ of $X$ such that $h^0(\O_Z)=n$ and generic points of $Hilb^n(X)$ are reduced, that is, they are the closed subschemes of $n$-distinct points of $X$. \\
Let $X^{(n)}$ denote the \textbf{$n$-th symmetric product} of $X$, which is defined to be the quotient of $X^n$ by the natural action of $S_n$ which permutes the factors of $X^n$, $X^n/S_n$. It is a projective scheme \cite[Lemma 7.1.1]{BK}. Any point of $X^{(n)}$ can be written as a formal sum $\sum_i m_i p_i$, where $\sum_i m_i =n$, all $m _i \in \N$ and $p_i \in X$ are distinct points.  There is a natural morphism of schemes from the Hilbert scheme of $n$ points to the $n$-th symmetric product called \textbf{Hilbert-Chow morphism}, which at the level of points can be described as follows: 
\begin{eqnarray*}
Hilb^n(X) &\xrightarrow{\hspace*{0.7cm} \gamma \hspace*{0.6cm}}  X^{(n)}\\
Z &\mapsto \sum_{p \in X} l(\O_{Z,p})p, 
\end{eqnarray*}
where $l(-)$ denotes the length of a module and the sum is a formal sum. This morphism is projective and gives a crepant resolution of $X^{(n)}$  \cite[Theorem7.3.1, 7.4.6]{BK}. Thus, we have $\omega_{Hilb^n(X)} \cong \omega_{X^{(n)}}$ and \cite[Lemma 7.1.7]{BK} implies that $\omega_{Hilb^n(X)} \cong (\omega_X)^{(n)}$. Thus, if $X$ is a variety (of dim $\geq 2$) with trivial canonical bundle, then so is $Hilb^n(X)$. 

\subsection{The Fundamental group of Hilbert scheme on supersingular Enriques}

We will compute the fundamental group by lifting the Hilbert scheme of supersingular Enriques to characteristic zero and computing the fundamental group of the geometric generic fiber and then using  specialization of fundamental groups to bound the type of elements  in the fundamental group of Hilbert scheme of supersingular Enriques. Then, since the Hilbert scheme is unirational so the fundamental group has no 2-torsion but the specialization map on fundamental groups shows that the fundamental group can be either a 2-torsion group or zero, so we have that the fundamental group is zero.

As a supersingular Enriques surface, $X$, lifts to characteristic 0 (over a DVR $A$ with residue field $k$ and fraction field $K$) as a (relative) Enriques surface $X_A$, with generic fiber $X_K$ (see Remark \ref{Enriqueslift}), we can construct lifts of $Hilb^n(X)$ as $Hilb^n(X_A)$ with generic fiber as $Hilb^n(X_K)$, the Hilbert scheme of points on an Enrique surface over $K$, a field of characteristic zero. Now using the specialization morphism of \'etale fundamental groups \cite[Corollary X.2.3]{SGA1} we have the following surjective map:
\begin{equation} \label{surjection}
\pi_{et}(Hilb^n(X_{\bar{K}})) \ra \pi_{et}(Hilb^n(X)) \ra 0
\end{equation}
and from \cite[Expose XIII, Proposition 4.6]{SGA1}, we have $$ 
\pi_{et}(Hilb^n(X_{\bar{K}})) \cong \pi_{et}(Hilb^n(X_{\C})),
$$ 
where $X_{\bar{K}}$ is the geometric generic fiber of the lift $X_A$ of $X$ and $X_{\C}$ is its base change to $\C$. We can always do base change to $\C$ since we are working with varieties of finite type over a field.  

The analytic fundamental group of $Hilb^n(X_{\C})$ can be computed as follows: (for example using \cite[Proposition 2.1]{OGrady13}, we get following isomorphisms for $n \geq 2$) 
$$
\Z/2\Z \cong \pi_{an}(X_{\C})^{ab} \cong H_1(X_{\C}, \Z) \cong \pi_{an}(Hilb^n(X_{\C})). 
$$
And since the groups are finite, we have that $\pi_{et}(X_{\C}) \cong \Z/2\Z$, see \cite[Remark I.5.1(c)]{Milne80}.
Thus it is just a 2-torsion group and then equation \ref{surjection} implies that $\pi_{et}(Hilb^n(X))$ is a group of order at most 2.  

On the other hand, supersingular Enrqiues surfaces are weakly unirational as proved by Blass in \cite{Bla} and this implies that $Hilb^n(X)$ is also weakly unirational, as we can construct a generically surjective rational map from $\mathbb{P}^n$ as follows:  
$$
\mathbb{P}^{2n} \dashrightarrow X^{n} \ra X^{(n)} \sim_{bir} Hilb^n(X).  
$$
In \cite{Eke}(see also \cite{CL-A03}), Ekedahl has proven that fundamental group of a weakly unirational variety over an algebraically closed field of characteristic $p >0$ is a finite group of order prime to $p$.

Combining the above observations we have 
\begin{proposition}\label{fund}
The Hilbert scheme of n points on supersingular Enriques surface, $Hilb^n(X)$ is simply connected.   
\end{proposition}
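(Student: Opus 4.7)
The plan is to combine two complementary bounds on $\pi_{et}(Hilb^n(X))$: an upper bound coming from lifting to characteristic zero, and a ``no $2$-torsion'' constraint coming from unirationality.

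First, I would produce a lift of the Hilbert scheme to characteristic $0$. By Remark \ref{Enriqueslift}, the supersingular Enriques surface $X$ extends to a relative Enriques surface $X_A$ over a DVR $A$ with residue field $k$ and fraction field $K$ of characteristic $0$; forming $Hilb^n(X_A)$ is then a smooth projective lift of $Hilb^n(X)$, with geometric generic fiber $Hilb^n(X_{\bar K})$. The specialization map for \'etale fundamental groups (\cite[Corollary X.2.3]{SGA1}) yields a surjection
$$
\pi_{et}(Hilb^n(X_{\bar K})) \twoheadrightarrow \pi_{et}(Hilb^n(X)).
$$
Next, invoking \cite[Expose XIII, Proposition 4.6]{SGA1}, I may replace $\bar K$ by $\C$. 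Over $\C$, $X_{\C}$ is a complex Enriques surface, so $\pi_{an}(X_\C) \cong \Z/2\Z$, and by a result recalled in the text (\cite[Proposition 2.1]{OGrady13}) the analytic fundamental group of $Hilb^n$ of a surface coincides with the abelianization $\pi_{an}(X_\C)^{ab}$ for $n \geq 2$, giving $\pi_{an}(Hilb^n(X_\C)) \cong \Z/2\Z$. Since this is finite, profinite completion is harmless and $\pi_{et}(Hilb^n(X_{\bar K})) \cong \Z/2\Z$. Thus $\pi_{et}(Hilb^n(X))$ has order at most $2$.

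For the lower direction, I would exploit that Blass \cite{Bla} proved supersingular Enriques surfaces are weakly unirational. A dominant rational map $\mathbb{P}^2 \dashrightarrow X$ induces $\mathbb{P}^{2n} \dashrightarrow X^n \twoheadrightarrow X^{(n)}$, and the Hilbert-Chow morphism $Hilb^n(X) \to X^{(n)}$ is birational (Section \ref{HilbS}), so $Hilb^n(X)$ is weakly unirational as well. Ekedahl's theorem \cite{Eke} then forces $\pi_{et}(Hilb^n(X))$ to be a finite group whose order is prime to $p = 2$.

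Combining both bounds, $\pi_{et}(Hilb^n(X))$ has order at most $2$ and is of order prime to $2$, hence is trivial. The main subtlety I expect to have to justify carefully is the identification of $\pi_{et}(Hilb^n(X_{\bar K}))$ with $\pi_{an}(Hilb^n(X_\C))^{\wedge}$ and thus with $\Z/2\Z$; everything else is a formal combination of the specialization surjection, Blass' unirationality, and Ekedahl's theorem.
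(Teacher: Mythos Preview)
Your proposal is correct and follows essentially the same argument as the paper: both combine the specialization surjection from the lifted Hilbert scheme (with $\pi_{et}(Hilb^n(X_\C))\cong\Z/2\Z$ computed via \cite[Proposition 2.1]{OGrady13}) with the weak unirationality of $Hilb^n(X)$ inherited from Blass' result and Ekedahl's prime-to-$p$ theorem. The steps, references, and logical structure match the paper's proof almost verbatim.
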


\subsection{Computation of $H^0(Hilb^n(X), \Omega_{Hilb^n(X)}^2)$}
The construction of a no-where vanishing 2-form on $Hilb^n(X)$, where $X$ is a supersingular surface, using the no-where vanishing 2-form on $X$ follows verbatim as in the case of varieties over $\C$ in \cite[Section 2.1.2]{OGrady13}. The only thing that needs to be checked is that $X^{(n)}$ is still $\mathbb{Q}$-factorial, this is \cite[Lemma 7.1.9]{BK}. 

Now we show that $h^{2,0}(Hilb^n(X)) > 1$. Recall that the hodge numbers 
$$
h^{q,0} = \dim_k H^0(X, \Omega^q)
$$
 are birational invariants (see for example, \cite[Ex. II.8.8]{HartAG}). Thus in our case (see section \ref{HilbS}) we have 
\begin{equation} \label{birHN}
h^{2,0}(Hib^n(X)) = h^{2,0}(X^{(n)}).
\end{equation}
Next note that since $ \Omega_{X^n} \cong \oplus_ip_i^*(\Omega_X)$, where $p_i: X^n \ra X$ is the i-th projection from the $n$-product of $X$ to $X$, and we have $\Omega^q_{X^{(n)}}= (\Omega^q_{X^n})^{S_n}$ for $q \geq 0$ , on the level of global sections we get 
$$
h^{q,0}(X^{(n)}) = \Gamma(X, \Omega^q_{X^n})^{S_n}.
$$
In case $q= 2$, we have 
$$
\Omega^2_{X^n} = \oplus_{\{i,j; i < j\}} p_i^*\Omega_X \otimes p_j^*\Omega_X \oplus \wedge^2(p_i^*\Omega_X).
$$
To write it explicitly in the case of $n=2$, we get 
\begin{eqnarray*}
\Omega^2_{X^2} =& p_1^*\Omega_X\otimes p_2^*\Omega_X \oplus \wedge^2p_1^*\Omega_X \oplus \wedge^2p_2^*\Omega_X \\
= &p_1^*\Omega_X\otimes p_2^*\Omega_X \oplus p_1^*\wedge^2\Omega_X \oplus p_2^*\wedge^2\Omega_X.
\end{eqnarray*}
The action of $S_n$ on $\Omega^2_{X^n}$ permutes the corresponding factors of $\Omega_X$. One of the collections of invariant sections of $\Omega_{X^n}^2$ are given by 
$$
\oplus \lambda p_i^*\sigma,
$$
where $\sigma \in H^0(X, \Omega^2_X)$ is a section, and $\lambda \in k^*$.  Next, note that the sections of $\oplus_{\{i,j; i < j\}} p_i^*\Omega_X \otimes p_j^*\Omega_X$ say given as $\oplus_{\{i,j; i < j\}} p_i^*\alpha_i \otimes p_j^*\alpha_j$, under the action of $S_n$ change to  $\oplus_{\{i',j'; i' < j'\}}p_{i'}^*\alpha_{i'} \otimes p_{j'}^*\alpha_{j'} \oplus _{\{i',j'; i' > j'\}}(-1)p_{j'}^*\alpha_{j'} \otimes p_{i'}^*\alpha_{i'}$, where the $i'$ (resp. $j'$) is the image of $i$ (resp. $j$) under the action of a permutation of $S_n$. \\

Explicitly in the case of $n=2$, we can see that the only non-trivial permutation $(12)$ of $S_2$ sends $p_1^*\alpha_1 \otimes p_2^*\alpha_2$ to $(-1)p_1^*\alpha_1 \otimes p_2^*\alpha_2$. \\

Thus, in case the characteristic of the ground field $k$ is not equal to 2, we see that these parts do not contribute to any more $S_n$ invariants, however in case characteristic $k $ is equal to $2$, which implies $-1 = 1$, we see that we will have more $S_n$ invariants of the form    $\oplus_{\{i,j; i < j\}} p_i^*\alpha_i \otimes p_j^*\alpha_j$ with $\alpha_i = \alpha_j$.   To put it formally we have shown that 

\begin{proposition} \label{HN}
For $X$ a supersingular Enriques surface defined over an algebraically closed field of characteristic 2, we have $h^{2,0}(Hilb^n(X)) > 1$.
\end{proposition}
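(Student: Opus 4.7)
The plan is to reduce the computation of $h^{2,0}(Hilb^n(X))$ to the study of $S_n$-invariants of global $2$-forms on $X^n$, and then to exploit the characteristic-two identity $-1 = 1$ to produce an invariant beyond the obvious symplectic form. First I would use the Hilbert-Chow morphism $\gamma: Hilb^n(X) \ra X^{(n)}$, which is birational between the smooth $Hilb^n(X)$ and the (singular but $\mathbb{Q}$-factorial) $X^{(n)}$, together with the birational invariance of $h^{q,0}$, to identify $h^{2,0}(Hilb^n(X)) = h^{2,0}(X^{(n)})$. Since the quotient map $\pi: X^n \ra X^{(n)}$ is finite and étale in codimension one (the ramification locus, the big diagonal, has codimension $\dim X = 2$), sections of $\Omega^2$ on $X^{(n)}$ (viewed as reflexive) correspond to $S_n$-invariant sections on $X^n$. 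Using the wedge decomposition
\[
\Omega^2_{X^n} \cong \bigoplus_{i<j} p_i^*\Omega_X \otimes p_j^*\Omega_X \;\oplus\; \bigoplus_i p_i^*\Omega^2_X
\]
and Künneth on global sections, the invariant space $H^0(X^n, \Omega^2_{X^n})^{S_n}$ splits into contributions from the ``diagonal'' pieces and the ``mixed'' pieces.

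For supersingular Enriques $X$, the isomorphism $\omega_X \cong \O_X$ gives $h^0(\Omega^2_X) = 1$, and Serre duality together with $h^1(\O_X)=1$ yields $h^{1,0}(X) = h^{2,1}(X) = h^{0,1}(X) = 1$, so $V := H^0(X, \Omega_X)$ is one-dimensional. The diagonal part contributes exactly one invariant, namely the symplectic form $\sum_i p_i^*\sigma$ descended from a generator $\sigma$ of $H^0(X, \Omega^2_X)$. For the mixed part, a transposition $(ij)$ sends $p_i^*\alpha \wedge p_j^*\beta$ to $p_j^*\alpha \wedge p_i^*\beta = -\, p_i^*\beta \wedge p_j^*\alpha$; since $S_n$ acts transitively on unordered pairs $\{i,j\}$, the $S_n$-invariants are determined by the $(12)$-invariant part of a single copy of $V \otimes V$. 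In characteristic $\neq 2$ this is the antisymmetric part $\wedge^2 V$, which vanishes for $\dim V = 1$, but in characteristic $2$ the sign disappears and the invariants are governed by $S^2 V$, which is one-dimensional. Adding the two contributions, $h^{2,0}(X^{(n)}) \geq 2 > 1$, which is the claim.

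The main obstacle I anticipate is justifying cleanly the identification $H^0(X^{(n)}, \Omega^2) = H^0(X^n, \Omega^2_{X^n})^{S_n}$ in positive characteristic, where $X^{(n)}$ is singular along the big diagonal: one must work with reflexive differentials on the $\mathbb{Q}$-factorial variety $X^{(n)}$ (invoking \cite[Lemma 7.1.9]{BK}), compute invariants on the smooth locus via the étale-in-codimension-one quotient map, and then extend uniquely using $\mathbb{Q}$-factoriality. A secondary verification is the wedge-sign convention that produces the ``symmetrizing'' twist in characteristic $2$, together with confirming $h^{1,0}(X) = 1$ in the supersingular case via Serre duality applied to $\omega_X \cong \O_X$ and $h^1(\O_X) = 1$.
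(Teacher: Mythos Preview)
Your approach is essentially identical to the paper's: reduce $h^{2,0}(Hilb^n(X))$ to $S_n$-invariant global $2$-forms on $X^n$ via the Hilbert--Chow morphism and birational invariance of $h^{q,0}$, decompose $\Omega^2_{X^n}$ into the diagonal summand $\bigoplus_i p_i^*\Omega^2_X$ and the mixed summand $\bigoplus_{i<j} p_i^*\Omega_X\otimes p_j^*\Omega_X$, and exploit $-1=1$ in characteristic~$2$ to extract an extra invariant from the mixed part. You are in fact more explicit than the paper about the passage through the singular variety $X^{(n)}$ (reflexive differentials, \'etale in codimension one, $\mathbb{Q}$-factoriality).

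There is one slip to flag. Your justification that $h^{1,0}(X)=1$ via Serre duality is not correct: with $\omega_X\cong\O_X$ one has $(\Omega^1_X)^\vee\cong\Omega^1_X$, and Serre duality yields $h^{1,0}=h^{1,2}$, \emph{not} $h^{1,0}=h^{2,1}$; the chain $h^{1,0}=h^{2,1}=h^{0,1}$ therefore breaks at the first step, and indeed the paper itself notes that Hodge symmetry $h^{1,0}=h^{0,1}$ can fail for Enriques surfaces in characteristic~$2$. The needed input $h^{1,0}(X)\geq 1$ is nonetheless true in the supersingular case and follows from the vanishing of Frobenius on $H^1(\O_X)$: the exact sequence $0\to\O_X\to F_*\O_X\xrightarrow{d} B\Omega^1_X\to 0$ gives $H^0(B\Omega^1_X)\cong\ker\bigl(F\colon H^1(\O_X)\to H^1(\O_X)\bigr)=H^1(\O_X)\neq 0$, and the inclusion $B\Omega^1_X\subset F_*\Omega^1_X$ then produces a nonzero global $1$-form. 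The paper uses this implicitly without comment; since your mixed-summand contribution is $S^2 H^0(X,\Omega^1_X)$, you need it explicitly, so this correction matters.
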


Moreover, the above arguments also shows the reason why G\"ottsche-Soergel Hodge number formula \cite[Theorem 2.3.14(3)]{GottscheBook}, proved in \cite{Got-Sor}, fails in characteristic 2, we miss out on a lot of invariant sections. \\

Thus, the Hilbert scheme of points on a supersingular Enriques surface gives examples of varieties with trivial canonical class which are symplectic variety but neither irreducible symplectic nor Calabi-Yau, thereby showing that there are strictly more classes of simply connected varieties with trivial canonical class in characteristic 2 than as  over $\C$ given by Beauville-Bogomolov decomposition theorem. There is also a Beauville Bogomolov type partial decomposition theorem in positive characteristic by Patakfalvi-Zdanowicz  \cite{ZMBB} but it is only for 1-ordinary varieties, which is equivalent to being $F$-split \cite[Proposition 2.13]{ZMBB} and $Hilb^n(X)$ is 1-ordinary (resp. F-split) if $X$ is so. However in this case, i.e., Hilbert scheme of 1-ordinary Enriques surface is not a symplectic variety. One of the better places to look for the definition of irreducible symplectic varieties would be to find out if we can identify more decomposition classes in the Patakfalvi-Zdanowicz version of Beauville-Bogomolov theorem.

\begin{remark} \label{charneq2}
 The class of examples we study occurs only in characteristic 2. Being in characteristic 2, influenced the example in two ways, it gave us supersingular Enriques as irreducible symplectic varieties, even though they are not so in any other characteristic and it lead to the failure of  G\"ottsche-Soergel's hodge number counting formula, thereby preventing the Hilbert scheme of supersingular Enriques surface from being irreducible symplectic.    
Since the failure of the hodge number formula is not to be expected in any other characteristic and there are no further irreducible symplectic surfaces, we do not expect to have further examples of this type. 
\end{remark}

\end{document}